\title{Planar digraphs without large acyclic sets}
\author{Kolja Knauer\footnote{\noindent
Aix-Marseille Université, CNRS, LIF UMR 7279, 13288, Marseille, France}
\and Petru Valicov\footnotemark[1]
\and Paul S. Wenger\footnote{\noindent School of Mathematical Sciences, Rochester Institute of Technology, Rochester, NY, United States of America}}
\begin{document}
\maketitle
\thispagestyle{empty}

\begin{abstract}
Given a directed graph, an acyclic set is a set of vertices inducing a directed subgraph with no directed cycle. In this note we show that for all integers $n\geq g\geq 3$, there exist oriented planar graphs of order $n$ and digirth $g$ for which the size of the maximum acyclic set is at most $\lceil \frac{n(g-2)+1}{g-1} \rceil$. When $g=3$ this result disproves a conjecture of Harutyunyan and shows that a question of Albertson is best possible.
\end{abstract}

\newenvironment{proof}{\par \noindent \textbf{Proof}}{\hfill$\Box$ \bigskip}

\newtheorem{corollary}{Corollary}
\newtheorem{definition}{Definition}
\newtheorem{question}{Question}
\newtheorem{problem}{Problem}
\newtheorem{proposition}{Proposition}
\newtheorem{theorem}{Theorem}
\newtheorem{lemma}{Lemma}
\newtheorem{conjecture}{Conjecture}
\newtheorem{sketch}{Sketch of proof}
\newtheorem{observation}{Observation}
\newtheorem{remark}{Remark}
\newtheorem{claim}{Claim}
\newtheorem{example}{Example}

\newcommand{\comment}[1]{\textcolor{red}{#1}}

\section{Introduction}

An \emph{oriented graph} is a digraph $D$ without loops and multiple arcs. An \emph{acyclic set} in $D$ is a set of vertices which induces a directed subgraph without directed cycles. The complement of an acyclic set of $D$ is a \emph{feedback vertex set} of $D$. A question of Albertson, which was the problem of the month on Mohar's web page~\cite{B} and was listed as a "Research Experience for Graduate Students" by West~\cite{West}, asks whether every oriented planar graph on $n$ vertices has an acyclic set of size at least $\frac{n}{2}$. There are three independent strengthenings of this question in the literature. In the following, we discuss them briefly.

\begin{conjecture}[Harutyunyan~\cite{H11}~\cite{HM14}]
\label{conj:Harutyunyan}
Every oriented planar graph of order $n$ has an acyclic set of size at least $\frac{3n}{5}$.
\end{conjecture}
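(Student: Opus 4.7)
The plan is to prove the conjecture by induction on $n$, treating a planar oriented graph $G$ on $n$ vertices as a minimum counterexample to the bound $3n/5$. The central task is to identify \emph{reducible configurations}: small subsets of $V(G)$ whose deletion produces a smaller planar oriented graph in which the inductive bound, combined with a local extension argument, already delivers an acyclic set of size $\geq 3n/5$ in $G$. I would first dispose of the easy cases: a source or sink $v$ is trivially reducible, since an acyclic set of $G-v$ of size $\geq 3(n-1)/5$ extends to $G$ by adding $v$, yielding at least $3n/5$ vertices. This step forces every vertex of a minimum counterexample to have both in- and out-neighbours, and in particular underlying degree at least $2$.

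Next I would examine low-degree vertices in detail. Since $G$ is planar, its average underlying degree is strictly less than $6$, so $G$ is rich in vertices of degree $2, 3, 4$ or $5$. For each possible oriented neighbourhood of such a vertex, I would try to exhibit a reducible configuration giving the slack $3|X|/5$ required when removing a set $X$. To support this tight slack I would lean on Borodin's theorem that every planar graph admits an acyclic $5$-colouring — a vertex partition $V_1 \cup \cdots \cup V_5$ in which $V_i \cup V_j$ induces a forest (hence an oriented acyclic subgraph) for every $i \neq j$. Taking the two largest colour classes immediately yields an acyclic set of size $\geq 2n/5$, and the additional $n/5$ needed to reach $3n/5$ must be recovered from among the remaining three classes by a careful analysis of the arcs they contain.

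The argument would close with a discharging step on the planar embedding: assign each vertex $v$ the initial charge $d(v)-4$ and each face $f$ the charge $\ell(f)-4$, so that Euler's formula gives total charge $-8$; using the structural restrictions produced by the reducible-configuration analysis, one designs rules that redistribute charge so that every vertex and every face ends with non-negative final charge, producing the contradiction. The step I expect to be hardest is the reducibility analysis for degree-$4$ and degree-$5$ vertices combined with the Borodin-colouring extension. The $3/5$ slack per removed vertex is substantially tighter than the $1/2$ slack that suffices for Albertson's question, so configurations that are reducible at the $1/2$ level typically fail here, and the reserve mass in the three "unused" colour classes is spread out in ways that can be blocked by directed triangles spanning all three classes at once. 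Controlling these spanning cycles, and showing that the resulting gain suffices uniformly across all forced low-degree configurations, is the crux of the plan.
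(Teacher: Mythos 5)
There is a fundamental problem: the statement you are trying to prove is \emph{false}, and the entire point of this paper is to disprove it. No choice of reducible configurations or discharging rules can succeed, because a counterexample exists. Concretely, the paper constructs (Theorem~\ref{thm:ub}, specialized to digirth $g=3$) a sequence of planar oriented graphs $D_f$ by induction on $f$: start with a directed triangle $D_1$, and to build $D_f$ from $D_{f-1}$ insert a directed path $s_1,s_2$ into a face containing two special vertices $x,y$ (no minimum feedback vertex set of $D_{f-1}$ contains both) and add the arcs $xs_1$, $ys_1$, $s_2x$, $s_2y$. The two new directed triangles $x s_1 s_2$ and $y s_1 s_2$ force every feedback vertex set to grow by one, so $D_f$ has $n=2f+1$ vertices and minimum feedback vertex set of size exactly $f$; hence its largest acyclic set has size $f+1=\frac{n+1}{2}$. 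For $n\geq 7$ we have $\frac{n+1}{2}<\frac{3n}{5}$, so $D_f$ violates the conjectured bound for every $f\geq 3$.

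It is worth seeing where your plan would concretely break down, since its individual ingredients are all legitimate. The source/sink reduction and the $\frac{2n}{5}$ bound from Borodin's acyclic $5$-coloring are fine (the latter is exactly the known digirth-$3$ bound in Table~\ref{table}); the step that can never be completed is ``the additional $n/5$ needed to reach $3n/5$ must be recovered from among the remaining three classes.'' In the graphs $D_f$, the directed triangles are packed so that this recovery is impossible: each newly added pair $\{s_1,s_2\}$ lies on two directed triangles sharing that pair, and these triangles tile the graph so tightly that roughly half the vertices must be deleted to destroy all directed cycles. Your own closing remark---that directed triangles spanning several color classes can block the gain---is precisely the obstruction, and it is not a technical difficulty to be overcome but a genuine one. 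The correct reaction to this obstruction is the one the paper takes: turn it into an explicit construction and refute the conjecture, showing that Albertson's bound of $\frac{n}{2}$, if true, is essentially best possible.
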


The \emph{digirth} of a directed graph is the length of a smallest directed cycle. Golowich and Rolnick~\cite{GR14} showed that a oriented planar graph of digirth $g$ has an acyclic set of size at least $\max(\frac{n(g-3)+6}{g},\frac{n(2g-3)+6}{3g})$, in particular proving Conjecture~\ref{conj:Harutyunyan} for oriented planar graphs of digirth~$8$.

A lower bound of $\frac{n}{2}$ for the size of an acyclic set in an oriented planar graph would immediately follow from any of the following two conjectures.

\begin{conjecture}[Neumann-Lara~\cite{NL85}]
\label{conj:Neumann-Lara}
Every oriented planar graph can be vertex-partitioned into two acyclic sets.
\end{conjecture}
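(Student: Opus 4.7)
Conjecture~\ref{conj:Neumann-Lara} is the natural directed analogue of a weak form of the Four Colour Theorem — it asserts that the dichromatic number of every planar oriented graph is at most $2$ — and so I would attempt it by the same overall strategy used for planar vertex-colouring problems: a minimum-counterexample / discharging / reducibility argument. Let $D$ be a planar oriented graph of smallest order that is not $2$-dichromatic, i.e.\ that admits no partition $V(D)=A_1\sqcup A_2$ with each induced sub-digraph $D[A_i]$ acyclic.

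My first step is to extract structural information from the minimality of $D$. Any source or sink can be coloured last after colouring $D$ minus that vertex, so $D$ has minimum in-degree and out-degree at least $1$, and similar extension arguments should eliminate vertices of low total degree or vertices sitting on short directed cycles composed entirely of low-degree vertices. I would also try to show that $D$ is $2$-connected in its underlying graph, and that its short faces do not contain certain "easy" configurations.

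The second step is a discharging argument, as in the Golowich–Rolnick treatment of large-digirth case. I would assign initial charges to vertices and faces according to Euler's formula so that the total charge is negative, and design discharging rules that, combined with the structural constraints above, force some vertex or face to retain positive charge — yielding a contradiction. The rules will be engineered so that the unavoidable configurations they leave are exactly those I can hope to eliminate in the next step.

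The third step, and the main obstacle, is reducibility: each allegedly unavoidable configuration must be shown to be absent from $D$, by extending a $2$-dichromatic partition of a smaller planar oriented graph (obtained by removing or contracting the configuration) back to $D$. Here the directed setting is genuinely harder than the undirected one: extending a partial partition to an uncoloured vertex $v$ fails exactly when, for \emph{each} class $A_i$, there is already a directed path in $D[A_i]$ from some in-neighbour of $v$ to some out-neighbour of $v$ — a global, path-based obstruction, not a local colour conflict. I expect that making this step go through requires a planarity-driven lemma constraining how such $A_i$-paths can weave through a small region, perhaps in the spirit of the structural results on acyclic colourings and orientations. Since this conjecture has resisted proof since 1985, I would not expect the generic discharging toolkit to suffice without a new reducibility idea of this kind; indeed the current partial results in the literature, such as the large-digirth bounds of~\cite{GR14}, avoid precisely this difficulty by assuming that short directed paths between neighbours of $v$ cannot exist at all.
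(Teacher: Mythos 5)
There is a fundamental issue here: Conjecture~\ref{conj:Neumann-Lara} is an \emph{open conjecture}, stated by Neumann-Lara in 1985 and still unresolved. The paper does not prove it — it merely cites it as one of several strengthenings of Albertson's question, notes that Harutyunyan and Mohar have established it only under the extra hypothesis of digirth at least $5$, and then goes on to prove something in the opposite direction (a construction showing that the $\frac{n}{2}$ lower bound on acyclic sets, which Conjecture~\ref{conj:Neumann-Lara} would imply, cannot be improved). So there is no "paper's own proof" to compare against, and no complete proof should be expected of you.

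To your proposal itself: you have written a research programme, not a proof, and you say as much. The minimum-counterexample setup and the elimination of sources, sinks, and low-degree vertices are sound but routine; they are the easy part. The entire difficulty is concentrated in the reducibility step, and you correctly diagnose why it is hard — the obstruction to extending a partial $2$-dichromatic partition to a vertex $v$ is the existence of a monochromatic directed path between an in-neighbour and an out-neighbour of $v$, which is a global condition that discharging-style local analysis cannot see. But diagnosing the obstacle is not the same as overcoming it: you supply no discharging rules, no list of unavoidable configurations, and no reducibility lemma, and you explicitly concede that a "new idea" would be needed to make the third step go through. As it stands, the argument establishes nothing beyond what is already known. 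If you want to contribute in this direction, the realistic targets are partial results (e.g.\ lowering the digirth threshold of~\cite{HM14} from $5$ to $4$) or, in the spirit of this paper, constructions probing how tight the conjecture is.
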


Harutyunyan and Mohar~\cite{HM14} proved Conjecture~\ref{conj:Neumann-Lara} for oriented planar graphs of digirth $5$.
The undirected analogue of Conjecture~\ref{conj:Neumann-Lara} is false. Indeed, it is equivalent to a conjecture of Tait~\cite{Tait}, saying that every $3$-connected planar cubic graph has a Hamiltonian cycle, which was disproved by Tutte~\cite{Tutte}. However, the following question remains open:

\begin{conjecture}[Albertson and Berman~\cite{AB79}]
\label{conj:Albertson-Berman}
Every simple undirected planar graph of order $n$ has an induced forest of order at least $\frac{n}{2}$.
\end{conjecture}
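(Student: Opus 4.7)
The plan is to attempt a proof by induction on $n$, the number of vertices of $G$, via the standard toolkit for planar graphs: reducible configurations plus a discharging argument. Let $G$ be a minimal counterexample, so that every induced forest of $G$ has size $< n/2$ while the statement holds on all smaller planar graphs.

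First I would establish small reducible configurations. For example, if $v \in V(G)$ has degree $\leq 1$, any induced forest $F$ of $G-v$ (which has size $\geq (n-1)/2$ by minimality) can be extended to $F \cup \{v\}$, immediately giving size $\geq n/2$; so $\delta(G) \geq 2$. Similar but more delicate extension arguments handle a degree-$2$ vertex, short cycles incident to low-degree vertices, and certain cluster configurations around $3$-faces. The general scheme is: for a small subgraph $H$, show that any induced forest of $G - V(H)$ can be extended into $H$ using at least $|V(H)|/2$ of its vertices, possibly after a local swap.

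Next I would set up discharging via Euler's formula. Each vertex $v$ receives charge $d(v) - 4$ and each face $f$ charge $\ell(f) - 4$, summing to $-8$. Discharging rules redistribute charge from high-degree vertices and long faces to triangles and low-degree vertices, with the goal of proving that if no reducible configuration occurs then every element ends non-negative, contradicting $-8$. A weighted refinement might be needed: instead of maximizing $|F|$, one maximizes a potential $\sum_{v \in F} w(v)$ for cleverly chosen weights aligned with the discharging rules, so that extensions into reducible configurations are always profitable in the weighted sense.

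The main obstacle is well known and essentially the reason the conjecture has remained open since 1979. Borodin's acyclic $5$-colorability theorem yields an induced forest of size $\geq 2n/5$ for free (take the union of the two largest color classes), and direct variants of the above strategy are known to push the bound to roughly $5n/9$, but closing the gap to $n/2$ has resisted attack. The difficulty is that the list of unavoidable configurations in a $2$-connected triangulation with $\delta \geq 4$ is rich enough that each one requires a tailored extension lemma, and these lemmas tend to guarantee only a fraction strictly less than one half of the vertices removed. Any plan I could realistically carry out would therefore more likely yield an improved lower bound rather than the tight $n/2$; a full proof would presumably require a genuinely new idea, perhaps a probabilistic or entropy-compression extension lemma replacing the deterministic swap arguments above.
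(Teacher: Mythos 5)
The statement you were asked to prove is Conjecture~\ref{conj:Albertson-Berman}, the Albertson--Berman conjecture from 1979. The paper does not prove it: it cites it as an open problem, records that the best known lower bound for general simple planar graphs is $\frac{2n}{5}$ (via Borodin's acyclic $5$-colourability theorem), and only contributes examples showing the conjectured bound would be tight. So there is no proof in the paper to compare against, and your proposal --- as you yourself concede in its final paragraph --- is not a proof either. It is an outline of the standard reducible-configurations-plus-discharging template, followed by an accurate admission that this template is not known to close the gap to $\frac{n}{2}$. That honesty is to your credit, but it means the submission has a genuine gap that is the entire mathematical content: no unavoidable set of configurations is exhibited, no extension lemmas are actually proved beyond the trivial degree-$\leq 1$ case, and no discharging rules are specified, let alone verified to leave every vertex and face with non-negative charge. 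Writing ``similar but more delicate arguments handle'' the remaining configurations is precisely the part of the argument that has resisted everyone since 1979.

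One concrete factual error is worth flagging: you assert that ``direct variants of the above strategy are known to push the bound to roughly $5n/9$.'' For general simple planar graphs this cannot be correct, since $\frac{5}{9} > \frac{1}{2}$ and such a bound would already prove (indeed strictly improve) the conjecture. Bounds of that magnitude are known only under additional hypotheses such as triangle-freeness or larger girth --- compare the digirth-dependent bounds in Table~\ref{table} for the directed analogue. For arbitrary simple planar graphs the record remains $\frac{2n}{5}$, exactly as the paper states. If you wish to pursue this line, the realistic target is an improvement of the constant $\frac{2}{5}$, not the conjecture itself; and note that by the examples in the paper ($K_4$, the octahedron, and the nine tight triangulations found by computer for $n \leq 10$) any method you devise must necessarily break down at $\frac{n}{2}$.
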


There are many graphs showing that Conjecture~\ref{conj:Albertson-Berman}, if true, is best-possible, e.g., $K_4$ and the octahedron. The best-known lower bound for the order of a largest induced forest in a planar graph is $\frac{2n}{5}$ and follows from Borodin's result on acyclic vertex-coloring of undirected planar graphs~\cite{B76}.

We summarize the discussion in Table~\ref{table}:

\begin{table}[!ht]
\begin{center}

 \begin{tabular}{l|c|c|c|c}
  digirth $g$ & $3$ & $4$ & $5$ & $\geq 6$\\
  \hline
  acyclic set & $\frac{2n}{5}$~\cite{B76} &  $\frac{5n+6}{12}$~\cite{GR14} & $\frac{n}{2}$~\cite{HM14} & $\frac{n(g-3)+6}{g}$~\cite{GR14}
 \end{tabular}

\end{center}
\caption{Lower bounds for acyclic sets in oriented planar graphs.}\label{table}
\end{table}

\section{The construction}

In this section, we construct oriented planar graphs of a given digirth with no large acyclic sets.
The most important case of this result is the one when the digirth is 3. Here our result implies that, if true, for odd $n$ the lower bound of $\frac{n}{2}$ in Albertson's question is best possible, while it might be improved by at most $1$ in the even case, see Problem~\ref{prob}. In particular, this disproves Conjecture~\ref{conj:Harutyunyan}. 

\begin{theorem}\label{thm:ub}
 For all integers $n\geq g\geq 3$, there exists an $n$-vertex oriented planar graph with digirth $g$ in which the maximum acyclic set has size $n-\lfloor\frac{n-1}{g-1}\rfloor = \lceil\frac{n(g-2)+1}{g-1}\rceil$.
\end{theorem}
\begin{proof}

Let $g \geq 3$ be fixed. We inductively show that for any $f \geq 1$ there exists a oriented planar graph $D_f$ such that: $D_f$ has digirth $g$, order $f(g-1)+1$, and a minimum vertex feedback set of size $f$. Moreover, $D_f$ has two vertices $x$ and $y$ on a common face $F$, which do not simultaneously appear in any minimum feedback vertex set. For $D_1$ we take a directed cycle of order $g$. This clearly satisfies all conditions. 
 If $f>1$, take the plane digraph $D_{f-1}$, add a directed path $s_1, \ldots, s_{g-1}$ into its face $F$ and add arcs $xs_1$, $ys_1$, $s_{g-1}x$, and $s_{g-1}y$. See Figure~\ref{fig:dessin}.

\begin{figure}[!ht]
  \centering
  \includegraphics{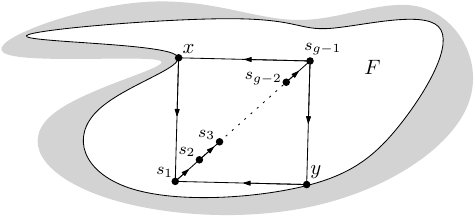}
  \caption{The construction in Theorem~\ref{thm:ub}}
  \label{fig:dessin}
\end{figure} 
 
 Since in $D_{f-1}$ no minimum feedback vertex set uses both $x$ and $y$, in order to hit the two newly created directed cycles, an additional vertex $z$ is needed. Thus, $D_f$ has a minimum feedback vertex set of size at least $f$. Now, choosing $z\in\{s_1, \ldots, s_{g-1}\}$, indeed gives a feedback vertex set of size $f$. Note that in $D_f$ there is no minimum feedback vertex containing two vertices from $\{s_1, \ldots, s_{g-1}\}$ and each such pair of vertices lies on a common face. It only remains to check the order of $D_f$. Since we added a total of $g-1$ new vertices, $D_f$ has $f(g-1)+1=:n$ vertices, i.e., $f=\frac{n-1}{g-1}$. Thus, the largest acyclic set in $D_f$ is of size $n-\frac{n-1}{g-1}$. Therefore, this construction proves the theorem for the case when $g-1$ divides $n-1$.
 
If $n-1$ is not divisible by $g-1$ then we do our construction for the largest integer $n'\leq n$ such that $n'-1$ is divisible by $g-1$, that is $n'=(g-1)\lfloor\frac{n-1}{g-1}\rfloor + 1$. We add $n-n'$ independent vertices to this graph. Now, the largest acyclic set of the obtained graph is of size $n'-\frac{n'-1}{g-1}+(n-n')=n-\lfloor\frac{n-1}{g-1}\rfloor$.
\end{proof}

By Theorem~\ref{thm:ub}, for even $n$, there exist $n$-vertex oriented planar graphs in which every acyclic set has size at most $\frac{n}{2}+1$. A computer check, using tools from Sage~\cite{Sage}, shows that there are ten planar triangulations with $n$ vertices ($n$ is even and $n\leq 10$) that are tight examples for Conjecture~\ref{conj:Albertson-Berman}. Furthermore, for all orientations of these examples, the largest directed acyclic set is of size at least $\frac{n}{2}+1$. We wonder if the following is true:

\begin{problem}\label{prob}
If a largest induced forest in a simple undirected planar graph $G$ on $n$ vertices is of size $a\leq\frac{n}{2}$, then for every orientation $D$ of $G$ there is an acyclic set of size at least $a+1$. 
\end{problem}

\subsubsection*{Acknowledgements} The authors thank the anonymous referees for helpful remarks. In particular, the reference to West's web page~\cite{West}, where a construction for the digirth $3$ case is claimed, was pointed out by one referee. This led to the collaboration of the third with the first two authors. The first author was also supported by PEPS grant EROS.

\end{document}